\theoremstyle{thmstyleone}%
\newtheorem{theorem}{Theorem}
\newtheorem{observation}[theorem]{Observation}%
\newtheorem{lemma}[theorem]{Lemma}
\theoremstyle{definition}
\newtheorem{remark}{Remark}%
\newtheorem{example}{Example}}
\newcommand{\mcl}{\mathcal}
\newcommand{\eps}{\varepsilon}
\newcommand{\N}{\mathbb{N}}
\newcommand{\R}{\mathbb{R}}
\newcommand{\PP}{\mathbb{P}}
\newcommand{\B}{\mathcal{B}(\R)}
\newcommand{\BB}{\mathcal{B}(\R^n)}
\newcommand{\M}{\mathcal{M}^1(\R)}
\newcommand{\KK}{\mathcal{K}}
\newcommand{\supp}{\operatorname{supp}}
\newcommand{\cov}{\operatorname{cov}}
\newcommand{\cor}{\operatorname{cor}}
\newcommand{\diam}{\operatorname{diam}}
\begin{document}
\title[Article Title]{ Conditional uncorrelation equals independence}


\author{\fnm{Dawid} \sur{Tar{\l}owski}}\email{dawid.tarlowski@gmail.com; dawid.tarlowski@im.uj.edu.pl}



\affil{\orgdiv{Faculty of Mathematics and Computer Science}, \orgname{Jagiellonian University}, \orgaddress{\street{\L ojasiewicza 6}, \city{Krak\'ow}, \postcode{30-348}, \country{Poland}}}




\abstract{We show that the stochastic independence of real-valued random variables is equivalent to the conditional  uncorrelation, where  the conditioning takes place over the Cartesian products of  intervals. Next, we express the mutual independence in terms of the conditional correlation matrix. Our results extend the results of Jaworski et al. (Electron. J. Stat., 18(1), 653-673, 2024), which are based on the copula functions and assume the existence of the joint density of the variables. We relax this assumption and show that the independence characterization via conditional uncorrelation is valid in full generality - that is, for all kinds of random variables and any dependencies between them. Additionally, we analyse the assumptions under which the independence is determined by the local uncorrelation. The measure-theoretic methodology we present uses the Radon-Nikodym derivative to reduce the multidimensional characterization problem to the simple one-dimensional conditioning.  To demonstrate the potential usefulness  of the presented results,  various numerical examples are presented. }

\keywords{ Radon-Nikodym derivative, independence, correlation, linear regression }



\maketitle

\section{Introduction}\label{sec1}

Various measures  of dependence between random variables are an important part of probability and statistics. Probably the most recognized  metric is Pearson's linear correlation coefficient, designed to measure linear dependencies in various contexts, \cite{LN}. Naturally, Pearson's correlation may fail to detect nonlinear dependencies between variables, and thus,  while it is well known that stochastically independent random variables  $X$ and $Y$ are  uncorrelated in the sense $E[XY]=E[X]\cdot E[Y]$,  this implication rarely may be reversed, \cite{David}. More, even the uncorrelation of all the higher moments of $X$ and $Y$ is not enough to force the independence, \cite{DePaula}. In this paper, following \cite{JJP}, we show that the independence of random variables $X$ and $Y$ may be completely characterised by the conditional uncorrelation of the form 
\begin{equation}\label{Complete}
E[XY|U]=E[X|U]\cdot E[Y|U],
\end{equation} where $U=\{X\in A, Y\in B\}$ and $A$, $B$ are either bounded intervals or half-bounded intervals (notation and the details will be introduced in Section \ref{S2}).  
Various types of conditional correlation have been analysed in the literature,  \cite{Akeman},\cite{Baba},\cite{JJP},\cite{Lawrance},\cite{LT}, and paper \cite{JJP} has shown that the conditional 
uncorrelation of the form \eqref{Complete} characterizes independence completely. However, the results of \cite{JJP} are based on the assumption that the vector $(X,Y)$ has a joint probability density 
which is fully supported on the cube $[-1,1]^2$. Our paper addresses those limitations and shows that the characterization of independence via the conditional uncorrelation of the form \eqref{Complete} 
holds true in full generality, that is,  for all types of variables and any dependencies between them. This covers, in particular, the cases of discrete variables, mixed variables, and continuous variables with 
a complicated, possibly nonconnected support. While the methodology of \cite{JJP} is based on the copula functions, \cite{Nelsen}, the measure-theoretic methodology presented here is, roughly speaking, about using the properties of the Radon-Nikodym derivative to reduce the general multivariate characterization problem  to the one-dimensional conditioning  which is well understood,  \cite{Navarro}. This approach will allow us  to prove Theorem \ref{T1} and its multivariate generalization Theorem \ref{T2}, the main results of this paper.  Theorem \ref{T1} shows that conditional covariance fully determines the independence of the variables, a statement somewhat similar in spirit to the main result from \cite{Jawor2}, which states that conditional variance  characterizes an arbitrary one-dimensional probability distribution (up to a translation). Theorem \ref{T1}, a generalization of  Theorem  3.1 from \cite{JJP}, implies that for any random variables $X$ and $Y$ that are not independent, there is some rectangle $[a,b]\times [c,d]$ conditioning on which we have the nonzero Pearson's correlation coefficient. The direct conclusion is that simple Pearson's coefficient, when appropriately conditioned, is sufficient to detect even complex nonlinear relationships between the variables regardless of the problem assumptions, for the discussion  see Chapter 4 in \cite{JJP}. One of the practical advantages of statistics which are based on truncated (conditional) moments is that the doubly-truncated moments always exist,  whereas the  classical moment-based tests may fail to work properly in case of heavy-tailed distributions. For recent applications of  conditional correlation see \cite{PJPW},  \cite{JJP}, and for various applications of one-dimensional conditional moments,  see  \cite{H},\cite{JEL},\cite{Pitera}, \cite{Multi}. While the conditional  characterization of one-dimensional distributions is relatively well understood, \cite{Nada},\cite{Navarro}, the multidimensional characterization results are still difficult to handle, with the exception of special cases, \cite{MW}. Theorem \ref{T2} from Section \ref{S4}, following Theorem 3.2 from \cite{JJP}, addresses the multivariate scenario and shows that the mutual independence of random variables may be expressed in terms of conditional correlation matrix. In the proof, we use the Radon-Nikodym derivative  to reduce the dimensionality of the problem through simple induction, and we believe that similar techniques may be useful in further studies on the conditional characterisations of multivariate problems. 
Besides the discussion associated with Theorems \ref{T1} and \ref{T2}, we also address the question of when the stochastic independence can be completely characterized  by the local uncorrelation, see Remark 2  in \cite{JJP} and Chapter 6 in \cite{DK}. Our main result here is Theorem \ref{LLL} which shows that the complete characterization of  independence by local uncorrelation is possible under the appropriate assumption on the support of the joint distribution.  Finally, we mention that Theorem \ref{T1} from Section \ref{S4} fully solves Problem 6327 from \cite{AP} which, according to my knowledge, is solved only  in the special case in which the variables have a joint density, in \cite{JJP}.

This paper is organised as follows. Section \ref{S2} introduces basic notation and definitions. Section \ref{S3} presents the auxiliary results, and  Section \ref{S4} presents and proves  Theorem \ref{T1} and Theorem \ref{T2}, the main results. Section \ref{Discussion} is a discussion section which addresses various scenarios, presents corresponding numerical applications,  and discusses the concept of  independence characterization via local linear independence.

\section{Preliminaries}\label{S2}
Let $(\Omega,\Sigma,\PP)$ be a probability space. Given $A\in\Sigma$, $1_A$ will denote the characteristic function of $A$, i.e. $1_A(x)=1$ for $x\in A$ and $1_A(x)=0$ for $x\notin A$. The space $\R^n$ is naturally equipped with the sigma-algebra of Borel sets $\BB$ and  we will say that $Z\colon\Omega\to\R^n$ is a random variable (r.v.) iff $Z\colon (\Omega,\Sigma)\to (\R^n,\BB)$ is measurable, i.e. $Z^{-1}(C)\in\Sigma$ for any $C\in\BB$. The probability distribution of r.v. $Z\colon\Omega\to \R^n$ will be denoted by $\PP_Z$, i.e. $\PP_Z$ is a Borel probability measure on $\R^n$ defined by $\PP_Z(D)=\PP[Z^{-1}(D)]$. We will often use notation $\{Z\in D\}:= Z^{-1}(D)$ and $\PP[Z\in D]:=\PP_Z(D)$. If we shortly say that $Z$ is a random variable without specifying the set of values then we mean that $Z$ is one-dimensional, i.e. $Z\colon \Omega\to\R$. In this paper all random variables are defined on $(\Omega,\Sigma,\PP)$. The mathematical expectation of r.v. $Z$ is defined by $E[Z]=\int\limits_{\Omega}Zd\PP$. If $U\in\Sigma$ satisfies $\PP[U]>0$ then $\PP[\cdot|U]$ will denote the conditional probability measure given by $\PP[C|U]=\frac{\PP[C\cap U]}{\PP[U]}$, $C\in\Sigma$, and $E[Z|U]$ will denote the conditional expectation $E[Z|U]=\int\limits_{\Omega}Zd\PP[\cdot|U]$. Naturally, $E[Z|U]=\frac{E[Z\cdot 1_U]}{\PP[U]}$.  If $U\in\Sigma$ is of the form $U=\{(X_1,\dots,X_n)\in A_1,\dots,A_n\}$ then we will  write 
$$E[Z|X_1\in A_1,\dots,X_n\in A_n]:=E[Z|U].$$

 If $X$ and $Y$ are non-constant random variables with $E[X^2]<\infty$ and $E[Y^2]<\infty$ then both the covariance and the (Pearson's) correlation are well defined:
$$\cov(X,Y)=E[XY]-E[X]\cdot E[Y]\mbox{ \ and \  }\cor(X,Y)=\frac{\cov(X,Y)}{\sigma_X\cdot \sigma_Y},$$
where  $\sigma_Z=\sqrt{E[Z^2]-(E[Z])^2}$ denotes the standard deviation of r.v. $Z$.

 If $A,B\subset\R$ are Borel sets with $\PP_{(X,Y)}(A\times B)>0$ then one may consider the corresponding conditional covariance and the conditional correlation, i.e. for $U=\{(X,Y)\in A\times B\}$   one may calculate those coefficients with respect to the conditional probability measure $\PP[\cdot|U]$. The conditional covariance $\cov_{(A,B)}(X,Y)$ is defined by 
$$\cov_{(A,B)}(X,Y)
=E[XY|(X,Y)\in A\times B]-E[X|(X,Y)\in A\times B]\cdot E[Y|(X,Y)\in A\times B],$$
and the conditional correlation $\cor_{(A,B)}(X,Y)$ is defined by : 
$$\cor_{(A,B)}(X,Y)=\frac{\cov_{(A,B)}(X,Y)}{\sigma_{(A,B)}(X)\cdot \sigma_{(A,B)}(Y)},$$
where $\sigma_{(A,B)}(\cdot)$ is the standard deviation calculated with respect to the conditional probability $\PP[\cdot|U]=\PP[\cdot|X\in A, Y\in B]$.  Note that the conditional covariance is well defined for any bounded rectangle $A\times B$ regardless of whether the  $X$ and $Y$ are square-integrable.

Now, we will shortly introduce some notation and basic facts  from measure theory, \cite{Ma}, \cite{Rudin}. Let $\mcl{M}^1(\Omega,\Sigma)$ denote the set of probability measures on the measurable space $(\Omega,\Sigma)$. We will say that $Q$ is a nonnegative measure on $(\Omega,\Sigma)$ if $Q$ is of the form $Q=c\cdot P$, where $c\geq0$ and $P\in \mcl{M}^1(\Omega,\Sigma)$. We will say that $Q$ is a signed measure if $Q=c_1\cdot P_1 - c_2\cdot P_2$, where $c_1,c_2\geq0$ and $P_1,P_2\in \mcl{M}^1(\Omega,\Sigma)$. A signed measure $Q$ is absolutely continuous with respect to $P\in \mcl{M}^1(\Omega,\Sigma)$, denoted by $Q<P$, if for any $A\in\Sigma$ with $P(A)=0$ we have $Q(A)=0$. If $Q<P$, then by Radon-Nikodym theorem there exists  a measurable function $h \colon (\Omega,\Sigma)\to(\R,\mcl{B}(\R))$ satisfying:
\begin{equation}\label{RND}
Q(A)=\int\limits_A h(x)P(dx),\ A\in\Sigma.
\end{equation} 
 Any such function $h$ is called the Radon-Nikodym derivative of $Q$ with respect to $P$, denoted by $\frac{dQ}{dP}$. Naturally, if $h_1\colon \Omega\to\R$ and $h_2\colon \Omega\to\R$ satisfy \eqref{RND} then $h_1=h_2$
  $P$ - almost surely, and formally, the Radon-Nikodym derivative $\frac{dQ}{dP}$ is  an element of $L^1(\Omega,\Sigma,P)$, the space of $P-$ integrable functions, where the functions that agree $P-$almost everywhere are identified. If we shortly write $\frac{dQ}{dP}(x)=h(x)$ or $\frac{dQ}{dP}=h$, then we just  mean that function $h$ satisfies $\eqref{RND}$, and we do remember about the almost sure nature of such equalities. A useful property is that if we have three measures with $Q_2<Q_1$ and $Q_1<P$, and $\frac{dQ_2}{dP}(x)=h_2(x)$, $\frac{dQ_1}{dP}(x)=h_1(x)$,  then 
\begin{equation}\label{prop}
\frac{dQ_2}{dQ_1}(x)=\frac{h_2(x)}{h_1(x)}\cdot 1_{\{z\colon h_1(z)>0\}}(x) \ \ \mbox{ for } Q_1-\mbox{ almost any } x\in\Omega.
\end{equation}
 In  case of Borel measures on $\R^n$, there is a beautiful characterization of $\frac{dQ}{dP}$ credited to Besicovitch, namely, given $Q$ and $P$, two finite Borel measures on $\R^n$, the limit
$$h(x)=\lim\limits_{\eps\to0^+}\frac{Q(B(x,\eps))}{P(B(x,\eps))}$$
is well-defined  for $P-\mbox{ almost any } x\in\R^n$, and defines a Borel measurable function $h\colon\R^n\to\R$, where $B(x,\eps)$ denotes the Euclidean ball centered at $x$ with radius $\eps$. Most importantly, if $Q<P$ then we have
\begin{equation}\label{RNCH}
\frac{dQ}{dP}(x)=\lim\limits_{\eps\to0^+}\frac{Q(B(x,\eps))}{P(B(x,\eps))}\mbox{  \  \  \ \ \ for\ \  }P  \mbox{\ \ - almost any }\ x\in\R^n,
\end{equation}
 see  Theorem 2.12 and Chapter 2 in \cite{Ma} for more details.

\section{Auxiliary results}\label{S3}

  Paper \cite{Navarro} addresses the general problem of the characterisation of a probability distribution on $\R$ in terms of its truncated first moments. Lemma \ref{L2}  follows directly from the results of \cite{Navarro}.
\begin{lemma}\label{L2}
\begin{enumerate}
\item For any random variable $X\colon\Omega\to\R$, the function
\begin{equation}\label{C1} (a,b)\to E[X|X\in [a,b] ]
\end{equation}
uniquely determines the probability distribution $\PP_X$ (the domain of the function is the set of pairs $a<b$ with $\PP_X([a,b])>0$). 
\item For any random variable $X\colon\Omega\to\R$ with $E|X|<\infty$, the function
\begin{equation}\label{C2} 
t\to E[X|X\geq t]
\end{equation}
uniquely determines the probability distribution $\PP_X$ (the domain of the function is the set of $t\in\R$ with $\PP[X\geq t]>0$.)
\end{enumerate}
\end{lemma}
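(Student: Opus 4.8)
The strategy is to prove the threshold statement~(2) first and then derive~(1) from it by a conditioning-and-limit argument. For~(2), I would put $\bar F(t)=\PP[X\ge t]$, so that on the domain $\{t:\bar F(t)>0\}$ one has $H(t):=E[X\mid X\ge t]=E[X\,1_{\{X\ge t\}}]/\bar F(t)$. A Tonelli computation gives $E[X\,1_{\{X\ge t\}}]=t\bar F(t)+\int_t^{\infty}\bar F(s)\,ds$, hence the integral equation
\begin{equation}\label{eq:ie}
\bigl(H(t)-t\bigr)\,\bar F(t)=\int_t^{\infty}\bar F(s)\,ds=:\Psi(t)=E\bigl[(X-t)^{+}\bigr].
\end{equation}
Since $E|X|<\infty$, the function $\Psi$ is finite, nonnegative, nonincreasing and absolutely continuous with $\Psi'=-\bar F$ a.e.; moreover $\Psi(t)>0$ exactly for $t<r^{\st}:=\operatorname{ess\,sup}X$, and there $H(t)-t=E[X-t\mid X\ge t]>0$. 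So on $(-\infty,r^{\st})$ the relation~\eqref{eq:ie} is the linear ODE $(\log\Psi)'(t)=-1/(H(t)-t)$, which integrates to
\begin{equation}\label{eq:sol}
\Psi(t)=\Psi(t_{0})\exp\!\Bigl(-\int_{t_{0}}^{t}\frac{ds}{H(s)-s}\Bigr),\qquad t,t_{0}<r^{\st}.
\end{equation}

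What remains is to pin down the constant $\Psi(t_{0})$, and this is the delicate part. From $H$ and its domain one reads off $r^{\st}$ (the right endpoint of the domain), $E[X]=\lim_{t\to-\infty}H(t)$ (dominated convergence), and $\ell^{\st}:=\operatorname{ess\,inf}X$ as $\sup\{t:H\text{ is constant on }(-\infty,t]\}$: indeed, for $\ell^{\st}<t<r^{\st}$ one has $E[X\mid X<t]<t\le E[X\mid X\ge t]$, so $H$ cannot remain equal to $E[X]$ past $\ell^{\st}$. If $\ell^{\st}>-\infty$ then $\Psi(\ell^{\st})=E[(X-\ell^{\st})^{+}]=E[X]-\ell^{\st}$ is known, and together with~\eqref{eq:sol} and the identity $\Psi(t)=E[X]-t$ for $t\le\ell^{\st}$ this determines $\Psi$ on all of $\R$. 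If $\ell^{\st}=-\infty$, the elementary asymptotics $\Psi(t)/(-t)\to1$ as $t\to-\infty$ (valid for any law with finite mean) give $\Psi(t_{0})=\lim_{t\to-\infty}(-t)\exp\!\bigl(\int_{t_{0}}^{t}(H(s)-s)^{-1}\,ds\bigr)$, again determining $\Psi$. In either case $\bar F=-\Psi'$ is determined a.e., hence so is the right-continuous c.d.f.\ of $X$ (two right-continuous c.d.f.'s that agree a.e.\ coincide), hence so is $\PP_{X}$.

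For~(1), fix a pair $(a,b)$ in the domain of the map~\eqref{C1} and let $Y$ have law $\PP_{X}(\cdot\mid X\in[a,b])$. Restricting~\eqref{C1} to sub-pairs of $[a,b]$ produces exactly the analogous function for $Y$, because $E[Y\mid Y\in[c,d]]=E[X\mid X\in[c,d]]$ for $[c,d]\subseteq[a,b]$. As $\operatorname{supp}\PP_{Y}\subseteq[a,b]$ we have $E[Y\mid Y\ge t]=E[Y\mid Y\in[t,b]]$ on its domain, which is a value of that restricted map, and $E|Y|<\infty$; so part~(2) shows that $\PP_{X}(\cdot\mid X\in[a,b])$ is determined by~\eqref{C1}. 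Finally let $a\to-\infty$ and $b\to+\infty$ through continuity points of the c.d.f.\ of $X$: then $\PP_{X}([a,b])\to1$, so $\PP_{X}(C)=\lim\PP_{X}(C\mid X\in[a,b])$ for every Borel set $C$, and therefore $\PP_{X}$ is determined.

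The step I expect to be the main obstacle is the recovery of the normalization constant in~\eqref{eq:sol}: the ODE delivers $\bar F$ only up to a positive multiplicative factor, and turning this into a genuine probability law forces one to extract $\operatorname{ess\,inf}X$, $\operatorname{ess\,sup}X$ and $E[X]$ from $H$, treating $\operatorname{ess\,inf}X$ finite and infinite separately. Atoms of $\PP_{X}$, where $\bar F$ and $H$ have jumps, cause no genuine difficulty since the whole argument runs through the absolutely continuous function $\Psi$ rather than through $\bar F$ directly. As the statement already notes, all of this is subsumed by the truncated-moment characterizations in Navarro et al.\ (1998), which one may instead simply cite.
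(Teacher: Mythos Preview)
Your proof sketch is correct. The integral equation $(H(t)-t)\bar F(t)=\Psi(t)$ with $\Psi(t)=E[(X-t)^{+}]$, the resulting ODE for $\log\Psi$, the recovery of the normalization via $\ell^{\st}$, $r^{\st}$ and $E[X]$, and the reduction of part~(1) to part~(2) by conditioning on $[a,b]$ and letting $a\to-\infty$, $b\to\infty$ all go through as you describe. The appeal to continuity points of the c.d.f.\ in the final limit is unnecessary (the convergence $\PP_X(C\mid X\in[a,b])\to\PP_X(C)$ holds for every Borel $C$ along any sequence $a\to-\infty$, $b\to\infty$), but it is harmless.

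The comparison with the paper is simple: the paper does not prove this lemma at all. It states that the lemma ``follows directly from the results of Navarro et al.\ (1998)'' and moves on; the actual work in the paper lies in Observation~\ref{OL2} and Lemma~\ref{LRN}, which build on Lemma~\ref{L2} rather than reprove it. Your argument is essentially a self-contained reconstruction of the relevant part of that reference (the truncated-moment characterization via the mean residual life / stop-loss transform), and you yourself note in the last sentence that one may simply cite. So there is no discrepancy in approach to flag---you have supplied what the paper deliberately outsources. The benefit of writing it out, as you do, is that the mechanism (an ODE determining $\bar F$ up to scale, plus a separate identification of the scale from the tail behaviour of $H$) becomes visible; the benefit of the paper's choice is brevity, since nothing downstream depends on \emph{how} Lemma~\ref{L2} is proved.
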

\ \\ 
 Let $\M:=\mcl{M}^1(\R,\mcl{B}(\R))$ denote the set of Borel probability measures on $\R$. In the whole paper we will assume that 
$$\KK=\{[a,b]\colon a<b\} \mbox{ or }\KK=\{[t,\infty)\colon t\in\R\}$$
although other types of intervals could be considered as well. For any $P\in\M$ let 
$$\KK^+_P=\{A\in\KK\colon P(A)>0\}.$$ 
Observation \ref{OL2} is a consequence of Lemma \ref{L2}. 

\begin{observation}\label{OL2}
   If $P_1$ and $P_2$ are two Borel probability measures on $\R$ (with\\ $\int\limits_{\R}|x|dP_i(x)<+\infty$ in case $\KK=\{[t,\infty)\colon t\in\R\}$) and $\KK^+_{P_2}\subset \KK^+_{P_1}$, then conditioning 
\begin{equation}\label{W}\frac{\int_A xdP_2(x)}{P_2(A)}=\frac{\int_A xdP_1(x)}{P_1(A)}, \ \ \ A\in \KK^+_{P_2},\end{equation}
forces $\KK^+_{P_1}=\KK^+_{P_2}$ and, by Lemma \ref{L2}, $P_2(A)=P_1(A)$ for any $A\in\B$.
\end{observation}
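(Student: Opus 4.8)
The plan is to prove the set identity $\KK^+_{P_1}=\KK^+_{P_2}$ first, and then to read off $P_1=P_2$ from Lemma \ref{L2}. The second step is the cheap one: once $\KK^+_{P_1}=\KK^+_{P_2}$ is known, the maps $A\mapsto\frac{\int_A x\,dP_i(x)}{P_i(A)}$ $(i=1,2)$ are defined on one and the same family and, by \eqref{W}, agree on it, so the conditional-mean function \eqref{C1} (respectively \eqref{C2} in the half-line case) is literally the same function for $P_1$ and for $P_2$; Lemma \ref{L2} then forces $P_1=P_2$, hence $P_1(A)=P_2(A)$ for every $A\in\B$. (The finite-first-moment hypothesis in the half-line case is exactly what part (2) of Lemma \ref{L2} requires.) So all the work goes into the inclusion $\KK^+_{P_1}\subset\KK^+_{P_2}$, the reverse being the standing assumption.

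For that inclusion, in the bounded-interval case $\KK=\{[a,b]\colon a<b\}$, I would argue by contradiction: suppose some $[a_0,b_0]$ has $P_1([a_0,b_0])>0$ but $P_2([a_0,b_0])=0$. Since $P_2(\R)=1$, $P_2$ must put positive mass below $a_0$ or above $b_0$; taking the first case (the second is mirror-symmetric), fix $c<a_0$ with $P_2([c,a_0))>0$. The key observation is that $P_2$ being null on $[a_0,b_0]$ makes $[c,b']\in\KK^+_{P_2}$ for every $b'\in[a_0,b_0]$, all with the \emph{same} conditional mean $\frac{\int_{[c,b']}x\,dP_2}{P_2([c,b'])}=\frac{\int_{[c,a_0)}x\,dP_2}{P_2([c,a_0))}=:\mu$, and $\mu<a_0$ since it averages values strictly below $a_0$. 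Then \eqref{W} gives $\int_{[c,b']}(x-\mu)\,dP_1=0$ for all $b'\in[a_0,b_0]$; subtracting the instances at $b'<b''$ yields $\int_{(b',b'']}(x-\mu)\,dP_1=0$ whenever $a_0\le b'<b''\le b_0$, and since $x-\mu>0$ on $(a_0,b_0]$ this forces $P_1((a_0,b_0])=0$, whence $P_1(\{a_0\})=P_1([a_0,b_0])>0$. To finish, let $b'\uparrow a_0$: the intervals $[c,b']$ stay in $\KK^+_{P_2}$, their $P_2$-conditional means tend to $\mu$ (using $P_2(\{a_0\})=0$), so by \eqref{W} their $P_1$-conditional means also tend to $\mu$; in the limit this gives $\int_{[c,a_0)}(x-\mu)\,dP_1=0$, and combining with $\int_{[c,a_0]}(x-\mu)\,dP_1=0$ and $P_1(\{a_0\})>0$ one gets $\mu=a_0$, a contradiction.

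For $\KK=\{[t,\infty)\colon t\in\R\}$ I would run the same scheme with the left endpoint sent to $-\infty$: if $[\tau,\infty)\in\KK^+_{P_1}\setminus\KK^+_{P_2}$, then $P_2$ is carried by $(-\infty,\tau)$, the function $t\mapsto E[X|X\ge t]$ is common to $P_1$ and $P_2$ on the downward-unbounded parameter set $\{t\colon P_2([t,\infty))>0\}$, and the Navarro-type reconstruction behind part (2) of Lemma \ref{L2} — which is exactly where finiteness of the first moments is used — forces $P_1([t,\infty))=P_2([t,\infty))$ on that whole set; pushing $t$ up to $\sup\{t\colon P_2([t,\infty))>0\}$ then collides with $P_1([\tau,\infty))>0$ (with a small split according to whether that supremum is an atom of $P_2$, the atomic case handled by applying \eqref{W} at that point). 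The step I expect to be the real obstacle, in both cases, is the degenerate configuration in which all of the $P_1$-mass inside the offending interval has collapsed onto a single endpoint atom: there the crude reasoning "an interior lump of mass tilts the conditional mean, so \eqref{W} fails" is unavailable, and one is forced into the limiting computation above (and, for half-lines, into extracting the tail-mass identity from the conditional-mean identity). The remaining manipulations — convex combinations of conditional expectations, dominated convergence for the limits — are routine.
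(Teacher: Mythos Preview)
Your bounded-interval argument is correct and is essentially the paper's proof run on the mirror side: the paper picks mass of $P_2$ to the \emph{right} of the offending interval, shows the surviving $P_1$-mass collapses to an atom at the right endpoint $d$, and then derives the contradiction by comparing $[d,d+1]$ with $(d,d+1]$; you pick mass of $P_2$ to the \emph{left}, collapse $P_1$ onto the left endpoint $a_0$, and finish by comparing $[c,a_0]$ with $[c,a_0)$. The subtraction $\int_{(b',b'']}(x-\mu)\,dP_1=0$ is a clean way to localise, and your endgame (subtracting $\int_{[c,a_0)}$ from $\int_{[c,a_0]}$ to force $\mu=a_0$) is arguably tidier than the paper's $M>M$ chain, but the skeleton is the same.

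For $\KK=\{[t,\infty)\}$ the two sketches diverge. The paper does not try to extract $\bar F_1=\bar F_2$ locally; it repeats the bounded-interval template, comparing the essential suprema $x_i=\sup\{y:\ P_i([y,\infty))>0\}$ and then ruling out an atom discrepancy at that point. Your route instead appeals to ``the Navarro-type reconstruction behind part (2) of Lemma \ref{L2}'' to get $P_1([t,\infty))=P_2([t,\infty))$ on the common parameter set, which is strictly more than Lemma \ref{L2} \emph{states}: Lemma \ref{L2} gives uniqueness from the full conditional-mean function, not a local tail identity from agreement on a sub-domain. That step is obtainable from Navarro et al.'s explicit formula for the survival function in terms of $m(t)=E[X\mid X\ge t]$, so your sketch can be completed, but as written it leans on the \emph{proof} of Lemma \ref{L2} rather than on the lemma itself. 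If you want to stay within what the paper actually proves, it is cleaner to follow the paper's line and argue directly with the essential supremum and the possible atom there.
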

\begin{proof}
Let us assume that $X_1$ and $X_2$ are random variables with $\PP_{X_i}=P_i$, $i=1,2$, so we may use the convenient notation $E[X_i|X_i\in A]=\frac{\int_A x P_{i}(dx)}{P_i(A)}$, $i=1,2$. 

  \textbf{1.} Consider the case $\KK=\{[a,b]\colon a<b\}$ which does not assume the existence of the first moments. Assume for a contradiction that for some $c<d$ we have 
	$$\PP_{X_1}([c,d])>0 \mbox{ and }\PP_{X_2}([c,d])=0.$$    We have $\PP_{X_2}(\R\setminus[c,d])=1$ and without loss of generality we assume that $\PP_{X_2}((d,+\infty))>0$. Let 
\begin{equation}\label{dx}x:=\inf\{y>d\colon \PP_{X_2}([c,y])>0\}.\end{equation}
For any $h>0$ we have $\PP[X_2\in [c,x+h]]>0$ and, by \eqref{W},
$$E[X_1|X_1\in [c,x+h]\ ]=E[X_2|X_2\in [c,x+h]\ ].$$
From the assumptions  $\PP[X_2\in[c,x)]=0$, and thus $\lim\limits_{h\to0^+}E[X_2| X_2\in[c,x+h]\ ]=x$. Hence
$$E[X_1|X_1\in [c,x]\ ]=\lim\limits_{h\to0^+}E[X_1| X_1\in[c,x+h]\ ]=\lim\limits_{h\to0^+}E[X_2| X_2\in[c,x+h]\ ]=x.$$
As $E[X_1|X_1\in [c,x]\ ]=x$, we have $\PP[X_1\in[c,x)\ ]=0$. This forces $x=d$ and 
$$\PP[X_1\in [c,d]]=\PP[X_1=d]>0.$$ 
 At the same time $\PP_{X_2}(\{d\})\leq\PP_{X_2}([c,d])=0$. We will show  that  $\PP_{X_1}(\{d\})>0$ is a contradiction with $\PP_{X_2}(\{d\})=0$. By definition of $d=x$ given by \eqref{dx}, and by $\PP_{X_2}(\{d\})=0$, we have $\PP_{X_2}([d+h,d+1])>0$ for any small $h>0$. We thus may write (recall that $\KK^+_{\PP_{X_2}}\subset \KK^+_{\PP_{X_1}}$) :
$$E[X_i|X_i\in (d,d+1]\ ]=\lim\limits_{h\to0^+}E[X_i|X_i\in [d+h,d+1]\ ],\ i=1,2.$$

From the above, by \eqref{W},
\begin{equation}\label{exd}E[X_1|X_1\in (d,d+1]\ ]=E[X_2|X_2\in (d,d+1]\ ].\end{equation}
Let
$$M:=E[X_1|X_1\in [d,d+1]\ ]=E[X_2|X_2\in [d,d+1]\ ].$$
We have, by $\PP[X_2=d]=0$, by $\eqref{exd}$ and next by $\PP[X_1=d]>0$,
$$M=E[X_2|X_2\in[d,d+1]\ ]= E[X_2|X_2\in(d,d+1]\ ]=$$
$$=E[X_1|X_1\in(d,d+1]\ ]>E[X_1|X_1\in[d,d+1]\ ]=M, $$
a contradiction.\\
\textbf{2.} The proof of the case  $\KK=\{[t,\infty)\colon t\in\R\}$ (under the assumption $E|X_i|=\int\limits_{\R}|x|dP_i<\infty$, $i=1,2,$) is rather similar to the previous case: first we show that for
$$x_i:=\sup\{y\in\R\colon \PP[X_i\in [y,+\infty)\ ]>0\}, i=1,2, $$
the conditioning  \eqref{W} forces $x_1=x_2$. Next,  in the case $d:=x_1=x_2<+\infty$, it remains to show that the situation $\PP[X_2=d]=0$ and $\PP[X_1=d]>0$ would lead to a contradiction.
\end{proof}

 In Chapter \ref{Discussion} we will be interested in when the family $K=\{[a,b]\colon a< b\}$ may be replaced with the family of arbitrarily small intervals. Namely, we  will be interested in the conditions under which Lemma \ref{L2} and Observation \ref{OL2} hold true for arbitrary small $\eps>0$ given  given the conditioning  set 
$$K(\eps)=\{[a,b]\colon 0<b-a<\eps\}.$$
 The example below shows that  additional assumptions are necessary. 

\begin{example}\label{LCC}
Let $X_1$, $X_2$, and $Z$ be independent random variables such that $X_1$ is uniformly distributed on $[0,1]$, $X_2$ is uniformly distributed on $[2,3]$ and $Z\in\{0,1\}$ has a Bernoulli distribution $B(1,p)$, where $p\in (0,1)$. Now, we define the random variable $X=Z\cdot X_1 + (1-Z)\cdot X_2$. It is easy note that $X$ has a density function $f_p$ given by 
$$f_p(t)=p\cdot 1_{[0,1]}(t)+(1-p)\cdot 1_{[2,3]}(t).$$ Additionally, for any Borel sets
$A\subset [0,1]$ and $B\subset[2,3]$, we have
$$E[X|X\in A]=E[X_1|X_1\in A]\mbox{ and } E[X|X\in B]=E[X_2|X_2\in B].$$
Now, fix $\eps\in (0,1)$, and note that for any interval $[a,b]$ with $b-a<\eps$ the conditioning $E[X|X\in [a,b]]$ depends only on the variables $X_1$ and $X_2$, that is, it does not depend on the parameter $p\in (0,1)$. Thus, the conditioning $E[X|X\in A]$, $A\in K^+_{X}(\eps)$, does not determine the probability distribution of $X$, where:
$$ K^+_{X}(\eps)=\{A\in K(\eps)\colon \PP_X(A)>0\}\mbox{ and }\eps\in (0,1).$$
\end{example}
\bigskip
The following lemma will be used in the next section to show that the problem of the conditional uncorrelation may be reduced to the one-dimensional problem from Lemma \ref{L2} and Observation \ref{OL2}.

\begin{lemma}\label{LRN}
Let $X$ and $Z$ be random variables with $Z\geq0$, $E[Z]<\infty$ and $E[|XZ|]<\infty$. Define the nonnegative measure $Q_1$ and the signed measure $Q_2$, both on $(\R,\B)$, by: 
$$Q_1(A)=E[1_A(X)Z],\ A\in\B,\mbox{ and } Q_2(A)=E[1_A(X)XZ],\ A\in\B.$$
We have $$\frac{dQ_2}{dQ_1}(x)=x.$$
\end{lemma}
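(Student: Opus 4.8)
The plan is to verify directly that the identity function $x\mapsto x$ satisfies the defining property of the Radon–Nikodym derivative of $Q_2$ with respect to $Q_1$, namely that $Q_2(A)=\int_A x\,Q_1(dx)$ for every $A\in\B$. First I would check that the two set functions are of the type the statement presumes: $Q_1$ is a finite nonnegative measure on $(\R,\B)$, since $Z\geq0$ and $Q_1(\R)=E[Z]<\infty$, while $Q_2$ is a signed measure, which one sees by writing $XZ=X^+Z-X^-Z$ with $E[X^+Z]+E[X^-Z]=E|XZ|<\infty$, so that $Q_2=Q_2^{+}-Q_2^{-}$ with $Q_2^{\pm}(A)=E[1_A(X)X^{\pm}Z]$ finite nonnegative measures. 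This also yields $Q_2<Q_1$: if $Q_1(A)=E[1_A(X)Z]=0$ then $1_A(X)Z=0$ $\PP$-a.s., hence $1_A(X)XZ=0$ $\PP$-a.s. and $Q_2(A)=0$; thus $\frac{dQ_2}{dQ_1}$ exists by the Radon–Nikodym theorem and only its identification remains.

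The key step is the change-of-variables formula: for every Borel $g\colon\R\to\R$ with $E[|g(X)|Z]<\infty$ one has $\int_\R g(x)\,Q_1(dx)=E[g(X)Z]$. I would prove this by the standard approximation argument: it holds for $g=1_A$ by the very definition of $Q_1$, extends to nonnegative simple $g$ by linearity, to arbitrary nonnegative Borel $g$ by the monotone convergence theorem (applied on $(\Omega,\Sigma,\PP)$ to $g_n(X)Z\uparrow g(X)Z$ and simultaneously on $(\R,\B,Q_1)$ to $\int g_n\,dQ_1\uparrow\int g\,dQ_1$ along simple $g_n\uparrow g$), and finally to signed $g$ by splitting $g=g^{+}-g^{-}$, the finiteness of both pieces being guaranteed by $E[|g(X)|Z]<\infty$.

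Applying this with $g(x)=x\,1_A(x)$ is legitimate because $E[|X|\,1_A(X)\,Z]\leq E[|X|Z]=E|XZ|<\infty$ by hypothesis, and it gives
\[
\int_A x\,Q_1(dx)=E\bigl[1_A(X)\,X\,Z\bigr]=Q_2(A),\qquad A\in\B,
\]
which is precisely the assertion $\frac{dQ_2}{dQ_1}(x)=x$. Alternatively one could route through $\PP_X$: with $h_1(x)=E[Z\mid X=x]$ one checks $\frac{dQ_1}{d\PP_X}=h_1$ and $\frac{dQ_2}{d\PP_X}=x\,h_1(x)$, whence \eqref{prop} gives $\frac{dQ_2}{dQ_1}(x)=x\cdot 1_{\{h_1>0\}}(x)$, equal to $x$ for $Q_1$-almost every $x$ because $Q_1(\{h_1=0\})=\int_{\{h_1=0\}}h_1\,d\PP_X=0$.

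I do not expect a genuine obstacle; the only point requiring care is the bookkeeping of integrability in the approximation argument, which is exactly where the hypotheses $E[Z]<\infty$ and $E|XZ|<\infty$ are used.
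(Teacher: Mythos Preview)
Your proof is correct. Your primary argument---establishing the change-of-variables identity $\int_\R g\,dQ_1=E[g(X)Z]$ by the standard indicator/simple/MCT approximation and then specializing to $g(x)=x\,1_A(x)$---is more direct and elementary than the paper's proof. The paper instead routes through conditional expectation: it passes to the sub-$\sigma$-algebra $\Sigma_X=X^{-1}(\B)$ on $\Omega$, pulls $Q_1,Q_2$ back to measures $\hat Q_1,\hat Q_2$ there, computes $\frac{d\hat Q_1}{d\PP}=E[Z\mid\Sigma_X]$ and $\frac{d\hat Q_2}{d\PP}=X\,E[Z\mid\Sigma_X]$, divides via \eqref{prop} to obtain $\frac{d\hat Q_2}{d\hat Q_1}=X$, and then pushes forward to $\R$ by the change-of-variables $\omega\mapsto X(\omega)$. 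Your alternative route via $\PP_X$ and $h_1(x)=E[Z\mid X=x]$ is precisely the paper's argument transported from $(\Omega,\Sigma_X)$ to $(\R,\B)$ along $X$. The upshot: your main approach avoids conditional expectation entirely and is shorter; the paper's approach is a bit more conceptual in that it isolates the role of $E[Z\mid X]$ as the density of $Q_1$ with respect to $\PP$, but it ultimately proves the same identity by a longer path.
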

\begin{proof}
 On the measurable space $(\Omega,\Sigma_X)$, where  $\Sigma_X=\{X^{-1}(C)\colon C\in\B\}$, we define two measures $\hat{Q}_1$ and $\hat{Q}_2$ by

$$ \hat{Q_1}(X^{-1}(C))= Q_1(C)\mbox{ and } \hat{Q_2}(X^{-1}(C))=Q_2(C),\ C\in\B.$$
By the definitions of $\hat{Q_1}$ and $\hat{Q_2}$, $Q_1$ and $Q_2$, and next by the properties of the conditional expectation $E[Z|X]=E[Z|\Sigma_X],$
\begin{equation}\label{RNE1}\hat{Q_1}(X^{-1}(C))=Q_1(C)=\int\limits_{X^{-1}(C)}Zd\PP=\int\limits_{X^{-1}(C)}E[Z|\Sigma_X]d\PP\
\end{equation}
and
\begin{equation}\label{RNE2} \hat{Q_2}(X^{-1}(C))=Q_2(C)=\int\limits_{X^{-1}(C)}XZd\PP=\int\limits_{X^{-1}(C)}XE[Z|\Sigma_X]d\PP.
\end{equation}
Let $\hat{\PP}$ denote  $\PP$ restricted to $(\Omega,\Sigma_X)\subset(\Omega,\Sigma)$, i.e. $\hat{\PP}=\PP|_{(\Omega,\Sigma_X)}$. By \eqref{RNE1} and \eqref{RNE2},

\begin{equation}
\label{12}\frac{d\hat{Q_1}}{d\hat{\PP}}=E[Z|\Sigma_X] \mbox{ and } \frac{d\hat{Q_2}}{d\hat{\PP}}=XE[Z|\Sigma_X].
\end{equation}
Hence,
$$\frac{d\hat{Q_2}}{d\hat{Q_1}}=X.$$
Thus, by the definition $\hat{Q}_1$ and $\hat{Q}_2$, by the definition of the Radon-Nikodym derivative and next by standard change of variables,

$$Q_2(C)=\hat{Q_2}(X^{-1}(C))=\int\limits_{X^{-1}(C)}Xd\hat{Q_1}=\int\limits_\Omega1_C(X)Xd\hat{Q_1}=$$
$$=\int\limits_\R1_C(x)xdQ_1(x)=\int\limits_CxdQ_1(x)$$
which implies $\frac{dQ_2}{dQ_1}(x)=x$.
\end{proof}
\begin{example}
Under assumptions of Lemma \ref{LRN}, consider the special case when vector $(X,Z)$ is absolutely continuous with respect to the Lebesgue'a measure so there exists a density function $f\colon\R^2\to\R^+$ and we have: $\PP_{(X,Z)}(D)=\int\limits_D f(x,y)dydx,\ D\in \mcl{B}(\R^2).$
Then, $Q_1(A)=E[1_A(X)Z]=\int\limits_{\R^2}1_A(x)zf(x,z)dxdz=\int\limits_{A\times \R}zf(x,z)dxdz,\ \ A\in\B.$
Define
$$h(x):=\int\limits_{\R}zf(x,z)dz,$$
and note that, by Fubini's theorem,
$$Q_1(A)=\int\limits_{A}\int\limits_{\R} zf(x,z)dzdx=\int\limits_Ah(x)dx,\ \  A\in\B.$$
Similarly, by Fubini's theorem, for any $A\in\B$,
$$Q_2(A)=E[1_A(X)XZ]=\int\limits_{A\times \R}xzf(x,z)dxdz=\int\limits_{A}x\int\limits_{\R}zf(x,z)dzdx=\int\limits_{A}xh(x)dx.$$
As $Q_1(A)=\int\limits_Ah(x)dx$ and $Q_2(A)=\int\limits_Axh(x)dx$, we indeed have $\frac{dQ_2}{dQ_1}(x)=x$.
\end{example}
\section{Main results}\label{S4}
It is a simple consequence of Fubini's theorem that if random variables $X$ and $Y$ are independent (in the sense $\PP_{(X,Y)}=\PP_X\otimes\PP_Y$) then  $cov(X,Y)=0$, and it is easy to show that the independence of $X$ and $Y$  implies $cov_{(A,B)}(X,Y)=0$ for any admissible test set $A\times B$. Theorem \ref{T1}  states that this implication may be reversed if one considers the family of bounded intervals or half-bounded intervals as test sets. In particular, the square-integrable random variables $X$ and $Y$ are independent iff they are uncorrelated conditioning on the tails $\{X\geq t\}$ and $\{Y\geq s\}$, i.e. when for any $t,s$ with $\PP[X\geq t, Y\geq s]>0$ we have: $\cov_{([t,\infty),[s,\infty))}(X,Y)=0$.\\

\begin{theorem}\label{T1}
Let  $\mathcal{K}=\{[a,b]\colon a<b\}$. For any random variables $X$ and $Y$, the following conditions are equivalent:
\begin{enumerate}
\item[1)] $X$ and $Y$ are independent,
\item[2)] for any $U=\{X\in A, Y\in B\}$ with $A, B\in \mathcal{K}$ and $\PP_{(X,Y)}(A\times B)>0$,$$E[XY|U]=E[X| U]\cdot E[Y|U].$$

\end{enumerate}
Under additional assumption: $E|X|<\infty$ and $E|Y|<\infty$, the  bounded intervals may be replaced with the half-bounded intervals $\{[t,\infty)\colon\ t\in \R\}$.
\end{theorem}

\begin{proof}
 The implication $"1) \Rightarrow 2)"$ is a rather straigthforward calculation. We will prove the second implication $"2)\Rightarrow 1)"$.\\
\textbf{Step 1.}  Assume  that $E|X|<\infty$ and $E|Y|<\infty$, and $\KK=\{[a,b]\}_{a<b}$ or $\KK=\{[t,\infty)\}_{t\in\R}$.\\
\textbf{1a.}  First we consider the case $\PP[Y>0]=1$. Fix $B\in\KK$ with  $\PP[Y\in B]>0$ which implies that $E[Y1_B(Y)]>0$. More, for any $A\in\KK$ we have  
$$E[1_A(X)1_B(Y)]>0\Leftrightarrow E[Y1_A(X)1_B(Y)]>0.$$

From condition 2) of the theorem, for any $A\in\KK$ with $\PP_{(X,Y)}(A\times B)>0$ we have

$$\frac{E[XY1_A(X)1_B(Y)]}{\PP_{(X,Y)}(A\times B)}=\frac{E[X1_A(X)1_B(Y)]}{\PP_{(X,Y)}(A\times B)}\cdot \frac{E[Y1_A(X)1_B(Y)]}{\PP_{(X,Y)}(A\times B)}$$
and thus
\begin{equation}\label{EEE}
\frac{E[XY1_A(X)1_B(Y)]}{E[Y1_A(X)1_B(Y)]}=\frac{E[X1_A(X)1_B(Y)]}{E[1_A(X)1_B(Y)]}.
\end{equation}
 Note that 
$$\hat{P_1}(A)=E[1_A(X)1_B(Y)]\mbox{ and }\hat{Q_1}(A)= E[Y1_A(X)1_B(Y)],\ A\in\B,$$ are nonnegative Borel measures with the same class of null sets $\hat{P_1}(A)=0\Leftrightarrow\hat{Q_1}(A)=0$. We now normalize those measures to obtain the corresponding  probability measures:
$$Q_1(A):=\frac{1}{E[Y1_B(Y)]}\cdot E[Y1_A(X)1_B(Y)],\ A\in \B$$
and
$$P_1(A):=\\P[X\in A| Y\in B]=\frac{E[1_A(X)1_B(Y)]}{E[1_B(Y)]},\ A\in \B.$$
 We want to reformulate  equation \eqref{EEE} in a way which will allow us to use Lemma \ref{LRN}. In this purpose define the following signed measures
$$Q_2(A):= \frac{1}{E[Y1_B(Y)]}\cdot{E[XY1_A(X)1_B(Y)]},\ \ A\in\B,$$
$$P_2(A):=\frac{1}{E[1_B(Y)]}\cdot E[X1_A(X)1_B(Y)], \ \ A\in\B,$$
so,  for any $A\in\KK$ with $\PP[X\in A,Y\in B]>0$, equation \eqref{EEE} takes the form
\begin{equation}\label{RN}
\frac{Q_2(A)}{Q_1(A)}=\frac{P_2(A)}{P_1(A)}.
\end{equation}
Now, we apply Lemma \ref{LRN}  to the pair $Q_2$ and $Q_1$ (with $Z:=Y1_B(Y)$)  which provides  the Radon Nikodym-derivative $\frac{dQ_2}{dQ_1}(x)=x$. In case $\KK=\{[a,b]\colon a<b\}$, Equation $\eqref{RN}$ forces $\frac{dP_2}{dP_1}=\frac{dQ_2}{dQ_1}$. Indeed, by  \eqref{RNCH},  for  $(P_1+Q_1)$ - almost any $x\in\R$,
$$\frac{dP_2}{dP_1}(x)=\lim\limits_{\eps\to0^+}\frac{P_2([x-\eps,x+\eps])}{P_1([x-\eps,x+\eps])}\mbox{ and }\frac{dQ_2}{dQ_1}(x)=\lim\limits_{\eps\to0^+}\frac{Q_2([x-\eps,x+\eps])}{Q_1([x-\eps,x+\eps])},$$
and the above limits are equal to each other by \eqref{RN}. However,  in case $\KK=\{[t,\infty)\colon t\in\R\}$ we need to use Lemma \ref{LRN} again:  the pair $P_1$ and $P_2$, by Lemma \ref{LRN} with $Z:=1_B(Y)$, satisfies  $\frac{dP_2}{dP_1}(x)=x$. Hence, in both cases, the equation \eqref{RN} takes the form 
$$\frac{\int\limits_AxQ_1(dx)}{Q_1(A)}=\frac{\int\limits_AxP_1(dx)}{P_1(A)},\ \mbox{ for any } A\in\KK\mbox{ with }P_1(A)>0.$$
By Lemma \ref{L2}, we have $Q_1(A)=P_1(A)$, $A\in \B$. As $B\in\KK$ with $\PP_Y(B)>0$ was fixed arbitrarily, by definitions of $Q_1$ and $P_1$, we have obtained that for any $B\in\KK$ and  $A\in\B$ with $\PP_{(X,Y)}(A\times B)>0$, we have 
$$\frac{E[Y1_A(X)1_B(Y)]}{E[1_A(X)1_B(Y)]}=\frac{E[Y1_B(Y)]}{E[1_B(Y)]}.$$

In other words, for $A\in\B, B\in\KK$  with $\PP[X\in A, Y\in B]>0$,
\begin{equation}\label{EXY}
E[Y|X\in A,\ Y\in B]=E[Y| Y \in B].
\end{equation}
 As the conditioning $E[Y|X\in A, Y\in B]$ does not depend on $X$, it is  natural  that $X$ and $Y$ are independent. To show this, we will use Lemma \ref{LRN} again: Equation \eqref{EXY} may be rewritten as:
$$\frac{\frac{1}{\PP[X\in A]}\cdot E[Y1_A(X)1_B(Y)]}{\PP[Y\in B| X \in A]}=\frac{E[Y1_B(Y)]}{\PP[Y\in B]},$$
 and hence, by Lemma \ref{LRN}, 
$$\frac{\int\limits_B y\PP_Y[ dy| X \in A]}{\PP[Y\in B| X \in A]}=\frac{\int\limits_By\PP_Y(dy)}{\PP_Y[ B]},$$
where $A\in\B,\ B\in\KK$  are arbitrary with $\PP_{(X,Y)}(A\times B)>0$.
 By Observation \ref{OL2}, we have $\PP_Y(\ \cdot\ |X\in A)=\PP_Y(\cdot)$, where  $A\in\B$ is arbitrary with $\PP[X\in A]>0$. This implies that $X$ and $Y$ are independent. 

\textbf{1b.} Now consider the case $Y>-c$, where $c>0$. By the assumption, $X$ and $Y$ are conditionally uncorrelated in sense $cov_{(A,B)}(X,Y)=0$ for any admissible $A$,$B\in \KK$. By \textbf{1a.}, it is enough to show that $X$ and $Y+c$ are conditionally uncorrelated too. Note that for any $A,B\in \KK$  we have $\PP_{(X,Y+c)}(
A\times B)>0\Leftrightarrow \PP_{(X,Y)}(A\times(B-c))>0$, where $B-c=\{b-c\colon b\in B\}$. Furthermore, the covariance is translation-invariant and the class $\KK$ is translation invariant in sense $B\in\KK \Rightarrow B-c\in\KK.$ Thus, for any $A,B\in\KK$ with $\PP_{(X,Y+c)}(A\times B)>0$, we have:
$$cov_{(A,B)}(X,Y+c)=cov(X,Y+c|X\in A, Y+c\in B)=$$
$$=cov(X,Y|X\in A, Y+c\in B)=cov(X,Y|X\in A, Y\in B-c)=0.$$
 Now, as we see that $Y+c$ and $X$ are conditionally uncorrelated, by point \textbf{1a.} $X$ and $Y+c$ are independent. Hence, $X$ and $Y$ are independent.

\textbf{1c.} Now we do not assume that $Y$ is bounded from below. The random variables $X$ and $Y$ are independent if and only if for any $c\in\R$ with $\PP[Y>c]>0$ 
the $X$ and $Y$ are independent with respect to the conditional probability $\PP_c=\PP[\cdot| Y>c]$, i.e. 
$$\PP[X\in A, Y\in B|Y>c]=\PP[X\in A| Y>c]\cdot \PP[Y\in B| Y>c], \ A,B\in\B.$$
For any $c$ small enough let $cov_{\PP_c}$ denote the covariance calculated on the probability space $(\Omega,\Sigma,\PP_c)$. Note that for $A,B\in \KK$, if $B\cap [c,\infty)\neq \varnothing$ then $B\cap [c,\infty)\in\KK$. Furthermore,
$$cov_{\PP_c}(X,Y|X\in A, Y\in B)= cov(X,Y|X\in A,Y\in B \cap [c,\infty)).$$
Hence, $X$ and $Y$ are conditionally uncorrelated with respect to the conditional probability measure $\PP_c=\PP[\cdot|Y\geq c]$ which, by \textbf{1b.}, implies that $X$ and $Y$ are independent with respect to $\PP_c$. As $c$ may be arbitrarily small, $X$ and $Y$ are independent. 

\textbf{Step 2.} It remains to consider the case $\KK=\{[a,b]\colon a<b\}$ without assuming the existence of the first moments of $X$ and $Y$. To prove that $X$ and $Y$ are independent it is sufficient to prove that they are independent with respect to the conditional probability
$$\PP_{c}=\PP[\ \cdot\ |(X,Y)\in [-c,c]^2]$$
for any $c>0$ large enough to  have $\PP[(X,Y)\in [-c,c]^2]>0$.  On the probability space $(\Omega,\Sigma,\PP_c)$ the $X$ and $Y$ are integrable and conditionally uncorrelated by the assumptions (note that if $A\in \KK$ and $A\cap[-c,c]\neq\varnothing$ then $A\cap[-c,c]\in\KK$), which by \textbf{Step 1} implies that $X$ and $Y$ are $\PP_c$ - independent for any large $c$. \end{proof}
\bigskip
\begin{remark}
One has to be careful while dealing with the quotient of measures. 
For instance, in case of the half-intervals $\KK=\{(-\infty,t]\colon\ t\in\R\}$, the quotient of the form
\begin{equation}\label{RR}
\frac{P_2(A)}{P_1(A)},\  A\in \KK^+_{P_1},
\end{equation} 
does not determine neither the ratio $\frac{P_2(D)}{P_1(D)}$, $D\in\mathcal{B}(\R)$, nor even the Radon-Nikodym  derivative $\frac{dP_2}{dP_1}$. To  illustrate that, it is enough to consider a simple two-state probability space: let $\Omega=\{0,1\}$ and let $\Sigma=2^{\Omega}$ be the family of all subsets of $\Omega$. Fix $a\in (0,1)$ and define two measures on $(\Omega,\Sigma)$:
$$ P_1(\{0\})=a, P_1(\{1\})=1-a,\mbox{ and } P_2(\{0\})=\frac{a}{2}, P_2(\{1\})=1-\frac{a}{2}.$$
It is easy to notice that the ratio $\eqref{RR}$ does not depend on $a\in(0,1)$, namely, it is determined by the following:
$$\frac{P_2(\{0\})}{P_1(\{0\})}=\frac{1}{2}\mbox{ and }\frac{P_2(\{0,1\})}{P_1(\{0,1\})}=1.$$
 At the same time, the Radon-Nikodym derivative $\frac{dP_2}{dP_1}$ is uniquely defined by:
$$\frac{dP_2}{dP_1}(0)=\frac12\mbox{ and }\frac{dP_2}{dP_1}(1)=\frac{1-\frac{a}{2}}{1-a},$$
and does depend on $a\in (0,1)$.  Naturally, similar subtleties  arise in the multidimensional settings.  Replacing the family $\{[a,b]\colon a<b\}$ with the family $\{(-\infty,t]\colon t\in\R\}$ is not always straightforward  in  characterisation problems. 
\end{remark}
\bigskip
Now, we will prove the multivariate generalisation of Theorem \ref{T1}. If  $X_1,\dots,X_n$ are random variables then for $U=\{X_1\in A_1,\dots,X_n\in A_n\}$ with $\PP[U]>0$ we define
$\cov_U(X_i,X_j)$, $\cor_U(X_i,X_j)$ and $sd_U(\cdot )$ as the covariance, correlation and the standard deviation calculated with respect to the conditional probability $\PP[\cdot|U]$. Let $\Sigma_U$ denote the conditional 
correlation matrix, i.e. 
$$\Sigma_U[i,j]=cor_U(X_i,X_j),\ \ i,j=1,\dots,n.$$
Above, if for some $i$ we have $sd_U(X_i)=0$ then we  shortly write $cor_U(X_i,X_j):=\delta_{ij}$ (Kronecker delta) which will keep the notation compact. The following theorem generalizes Theorem \ref{T1} and will be proved by mathematical induction with use of the same techniques.  \\
\begin{theorem}\label{T2}
Let $X_1,\dots,X_n$ be random variables and let $\KK=\{[a,b]\colon a<b\}$.
 The following conditions are equivalent:
\begin{enumerate}
\item $X_1,\dots,X_n$ are mutually independent (i.e. $\PP_{(X_1,\dots,X_n)}=\PP_{X_1}\otimes\dots\otimes\PP_{X_n}$),
\item the conditional correlation matrix equals to the identity matrix, i.e.
$$\Sigma_U=I_n,$$
 for any $U=\{X_1\in A_1,\dots,X_n\in A_n\}$ such that $A_1,\dots,A_n\in \KK$ and $\PP[U]>0$.
\end{enumerate}
If $X_1,\dots,X_n$ have finite second moments then the bounded intervals may be replaced  with $\KK=\{[t,\infty)\colon t\in\R\}$.
\end{theorem}
\begin{proof}
The implication $"1.\Rightarrow 2."$ is straightforward and we will focus on $"2.\Rightarrow 1."$ The proof is based on the induction. The case $n=2$ is a direct conclusion from  Theorem \ref{T1}.  For the induction step assume that Theorem \ref{T2} holds true for some natural $n\geq 2$ and assume that we are given the random vector $(X_1,\dots,X_{n+1})$ which satisfies condition 2., i.e. $\Sigma_U=I_{n+1}$ for any admissible $U$.  We may  apply Theorem $\ref{T2}$ to any $n$-dimensional vector of the form $(X_i)_{i\neq j}=(X_1,\dots,X_{j-1},X_{j+1},\dots,X_n)$ which implies  the mutual independence of  $X_1,\dots,X_{j-1},X_{j+1},\dots,X_n$. Thus, to show the induction step it is enough to show that some marginal variable $X_j$ is independent of the $n$-dimensional vector $(X_i)_{i\neq j}$. The general  induction step from $"n"$ to $"n+1"$ is the same as in the case from $n=2$ to $n=3$ but the notation is slightly more complex. We will show the implication from  $"n=2"$ to $"n=3"$.  Let $(X,Y,Z):=(X_1,X_2,X_3)$. We will show that $Y$ is independent of the vector $(X,Z)$. The proof is similar the reasoning from the proof of Theorem 1 and we will be a little more concise.\\
\textbf{1.} Assume that $X,Y,Z$ are square-integrable and $K=\{[a,b]\colon a<b\}$ or $K=\{[t,\infty)\}$. \\
\textbf{1a)} Assume that $Y>0$.  If $(A,B,C)\in \KK^3$ satisfy $\PP[(X,Y,Z)\in A\times B\times C]>0$, then for $U:=\{(X,Y,Z)\in A\times B\times C\}$,  by the assumptions, we have
$$E[X\cdot Y|U]=E[X|U]\cdot E[Y|U]$$
which may be rewritten as
\begin{equation}\label{E2}\frac{E[XY1_A(X)1_B(Y)1_C(Z)]}{E[Y1_A(X)1_B(Y)1_C(Z)]}=\frac{E[X1_A(X)1_B(Y)1_C(Z)]}{E[1_A(X)1_B(Y)1_C(Z)]}.
\end{equation}
We fix $B,C\in\KK$ with $\PP[(Y,Z)\in  B\times C]>0$ and we normalize the nonnegative measures $A\to E[Y1_A(X)1_B(Y)1_C(Z)]$ and  $A\to E[1_A(X)1_B(Y)1_C(Z)]]$ to obtain the following two probability Borel measures
$$Q_1(A)=\frac{E[Y1_A(X)1_B(Y)1_C(Z)]}{E[Y1_B(Y)1_C(Z)]}\mbox{ and } P_1(A)=\frac{E[1_A(X)1_B(Y)1_C(Z)]}{E[1_B(Y)1_C(Z)]}.$$
Now we rewrite \eqref{E2} with use of $P_1$ and $Q_1$ and next, as in the proof of Theorem \ref{T1}, we use the Lemmas \ref{L2} and \ref{LRN}  to obtain $P_1=Q_1$, i.e.
\begin{equation}\label{EEEE}\frac{E[Y1_A(X)1_B(Y)1_C(Z)]}{E[Y1_B(Y)1_C(Z)]}=\frac{E[1_A(X)1_B(Y)1_C(Z)]}{E[1_B(Y)1_C(Z)]}
\end{equation}
for any $A\in\B$ and $B,C\in\KK$ with $\PP_{(Y,Z)}(B\times C)>0$. If $\PP_{(X,Y,Z)}[A\times B\times C]>0$ then we rewrite \eqref{EEEE} to obtain
\begin{equation}\label{E3}\frac{E[Y1_A(X)1_B(Y)1_C(Z)]}{E[1_A(X)1_B(Y)1_C(Z)]}=\frac{E[Y1_B(Y)1_C(Z)]]}{E[1_B(Y)1_C(Z)]}.\end{equation}
Now fix $A\in\B$ and $C\in\KK$ with $\PP_{(X,Z)}(A\times C)>0$ and normalize the nonnegative measures $B\to E[1_A(X)1_B(Y)1_C(Z)]$ and $B\to E[1_B(Y)1_C(Z)]$ from the denominators of $\eqref{E3}$, and again use Lemma \ref{LRN} and Observation $\ref{OL2}$ to obtain:
$$\frac{E[1_A(X)1_B(Y)1_C(Z)]}{1_A(X)1_C(Z)}=\frac{E[1_B(Y)1_C(Z)]]}{E[1_C(Z)]}$$
for any $A,B\in\B $ and $C\in \KK$ with $\PP_{(X,Z)}(A\times C)>0$. In other words, for such $A$,$C$ and any $B\in\B$ we have
\begin{equation}\label{YXZ}\PP[Y\in B|X\in A, Z\in C]=\PP[Y\in B|Z\in C].
\end{equation} 
By the assumptions,  $Y$ and $Z$ are conditionally uncorrelated and thus, as Theorem \ref{T2} holds true for $n=2$, $Y$ and $Z$ are independent. To sum up:  for any $B\in\B$ and any $A\in \B,C\in\KK$ with $\PP_{(X,Z)}[A\times 
C]>0$, by \eqref{YXZ} and by the independence of $Y$ and $Z$,
$$\PP[Y\in B|X\in A, Z\in C]=\PP[Y\in B|Z\in C]=\PP[Y\in B]$$
which implies that $Y$ is independent of the vector $(X,Z)$.This finishes the proof (recall that the marginals $X$ and $Z$ are independent by Theorem \ref{T2} applied to the $2$-dimensional vector $(X,Z)$). \\
\textbf{1b)} If there is $c\in\R$ such that $\PP[Y>c]=1$ then note that the random variables $X,Y+c,Z$ satisfy condition 2. of Theorem \ref{T2} and thus they are mutually independent by point 1a). This forces the independence of $X,Y,Z$. \\
\textbf{1c)} In general case $Y\in\R$, for any small $c\in\R$ with $\PP[Y>c]>0$ we consider the probability space $(\Omega,\Sigma,\PP[\cdot|Y\geq c])$ on which, as we may note, the random vector $(X,Y,Z)$ satisfy $\Sigma_U=I_3$ for any admissible $U$. By 1b), the marginals $X,Y,Z$ are mutually independent with respect to $\PP[\cdot|Y\geq c])$ for any $c$ small enough wich implies that they are independent with respect to $\PP$.\\

\textbf{2.} At the end we do not assume that $X,Y,Z$ have finite moments and we assume that $\KK=\{[a,b]\colon a<b\}$ so  the conditional standard deviations are finite. Note that  $X,Y,Z$ are bounded and conditionally uncorrelated on the probability space $(\Omega,\Sigma,\PP_c)$, where $c>0$ is large enough to have the following  conditional probability well defined 
$$\PP_c=[\ \cdot \ | X,Y,Z\in [-c,c]].$$
By point \textbf{1.}, $X,Y,Z$ are mutually independent with respect to $\PP_c$ for any such large $c$ which implies that $X,Y,Z$ are mutually independent with respect to $\PP$.
\end{proof}
By Theorem \ref{T2},  $X_1,\dots,X_n$ are mutually independent iff $\cov_U(X_i,X_j)=0$ for any pair $i\neq j$ and any $U=\{X_1\in A_1,\dots,X_n\in A_n\}$ with $\PP[U]>0$, where $A_l\in K$ for any $l=1,\dots, n$. The following remark is a straightforward conclusion.\\

\begin{remark} [Pairwise conditional independence implies mutual indepedence] Assume that $X_1,\dots,X_n$ are random variables such that for any $U=\{X_1\in A_1,\dots,X_n\in A_n\}$ with $\PP[U]>0$ and $A_i\in K, i=1,\dots,n$, we have
\begin{equation}\label{mutual}\PP[X_i \in A,\ X_j \in B|U]=\PP[X_i\in A|U]\cdot\PP[X_j\in B|U], \mbox{ for any }i\neq j.
\end{equation}
Then, $X_1,\dots,X_n$ are mutually independent. Indeed, if some $X_i$ and $X_j$ are independent with respect to the measure $\PP[\cdot|U]$, they are uncorrelated in the sense $\cov_U(X_i,X_j)=0$. Thus, By  Theorem \ref{T2}, condition \eqref{mutual} forces mutual independence.
\end{remark}


\section{Discussion and numerical examples}\label{Discussion}
In this section, we discuss various scenarios and present examples in which the conditional correlation is estimated numerically. As the similar discussion  is presented in Chapter 4  in \cite{JJP}, we will focus on the situations in which the assumptions of \cite{JJP} are not satisfied. First, we will present numerical examples to illustrate the potential usefulness of the presented theory, and then we will discuss the concept  of local uncorrelation in the context of independence characterization, see Remark 2  in \cite{JJP} and Chapter 6 in \cite{DK}.\\

In practice,  given the sample of pairs of observations $(x_i,y_i)_{i=1}^n$ and a rectangle $A\times B$, the conditional versions of covariance and correlation coeficients are easy to estimate: one simply calculates the standard empirical covariance and correlation coefficients on the subsample of $(x_i,y_i)_{i=1}^n$ determined by the set of indices: 
$$I=\{i\in \{1,\dots,n\}\colon (x_i,y_i) \in A\times B \}.$$
The mathematical justification of this procedure follows directly from the properties of the conditional probability: if the random vectors $Z_i=(X_i,Y_i)$, $i=1,2,\dots$, are i.i.d. and $U=\{(X_1,Y_1)\in A\times B\}$ satisfies $\PP[U]>0$ then for 
$$N_1:=\min\{n\colon Z_n\in A\times B\}\mbox{ and } N_{k+1}:=\min\{n> N_k\colon Z_n\in A\times B\}$$ the random variables $Z_{N_1}, Z_{N_2},\dots$ are independent and have probability distribution equal to the conditional distribution $\PP_{Z_1}[\cdot |U]$, i.e.
$$\PP[Z_{N_k}\in D]=\PP[Z_1\in D|U],\ D\in\mathcal{B}(\R^2), k\in\N.$$

In practice, set $A\times B$ may be determined by the quantiles of the sample, see \cite{JEL} for the theoretical properties of such estimators, or one can choose the conditioning set based on the specific features of the 
sample. Now, we will discuss a few examples to better illustrate the general concept and its potential usefulness, without aiming to develop a rigorous  statistical framework. We will start with the simple numerical illustration of the the  
textbook example \cite{NCC}, where $X$ and $Y$ are uncorrelated but the conditional correlation varies between $0$ and $1$, depending on the choice of conditioning set. In this example the  analytic formula for  quantile conditioning can be derived explicitely, see  equation (20) in \cite{JJP}.
\begin{example}[Illustrative example.]
 Let $X$ and $Z$ be independent and such that $\PP_{X}=N(0,1)$ and $\PP[Z=1]=\PP[Z=-1]=\frac12$. Define $Y=Z\cdot X$. It is easy to see that $\PP_Y=N(0,1)$, $X$ and $Y$ are not independent, and at the same time they are uncorrelated: $\cov(X,Y)=E[YX]-E[X]E[Y]=E[ZX^2]=E[Z]\cdot E[X^2]=0.$ Also, it is easy to see that the vector $(X,Y)$ is supported on the set $\{(x,y)\in\R^2\colon |x|=|y|\}$, and that for any $t>0$ and $U=\{|X|\leq t, |Y|\leq t\}$ we will have
$$\cor_U(X,Y)=0.$$ 
The above follows directly from the geometric interpreation of the Pearson's correlation coefficient: given any square $\{|X|\leq t, |Y|\leq t\}$, the line of best fit between  $Y$ (independent variable) and $X$ (dependent variable) is horizontal. At the same time, on the set $V=\{X\geq0, Y\geq0\}$ we have full linear dependence: $Y=X$ and $\cor_V(X,Y)=1$.   Figure 1 presents a simple numerical illustration. 

\begin{figure}[H]
\centering
\includegraphics[width=3in]{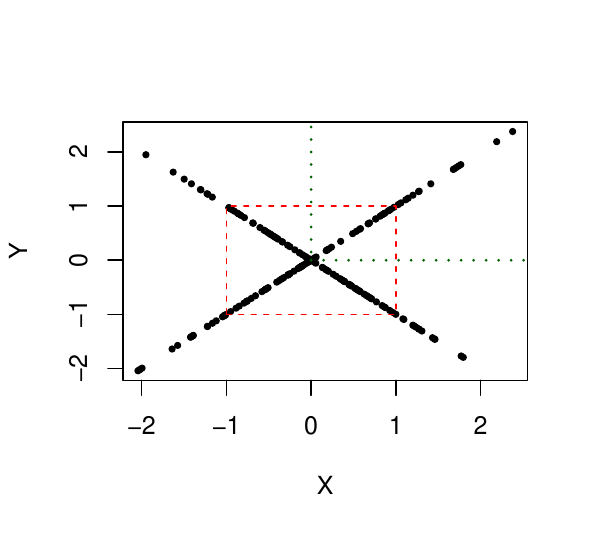}
\caption{   $200$ points are sampled from the probability distribution of the vector $(X,Y)$. The red dashed line is the contour of the square $U=[-1,1]^2$ and the green dashed line is the contour of the set $V=\{(x,y)\colon x\geq0,y\geq0\}$}
\label{fig_1}
\end{figure}
\end{example}
\begin{example}[Discrete variables] Assume that $X\in\{a_1,\dots,a_n\}\subset\R$ and $Y\in\{b_1,\dots,b_m\}\subset\R$, where $\PP[X=a_i]>0$, $\PP[Y=b_j]>0$, for all $i$, $j$. Assume additionally that $a_i<a_{i+1}$ and $b_j<b_{j+1}$ for all $i<n$, $j<n$. From Theorem \ref{T1} it follows that $X$ and $Y$ are independent if and only if for any  $i\leq n, j\leq m$.
\begin{equation}\label{Aff}
E[XY| X\leq a_i, Y\leq b_j]=E[X| X\leq a_i, Y\leq b_j]\cdot E[Y| X\leq a_i, Y\leq b_j].
\end{equation}
If $i=1$ or $j=1$, equation $\eqref{Aff}$ is satisfied trivially. Hence, in the simplest scenario: $X\in\{a_1,a_2\}$ and $Y\in\{b_1,b_2\}$, we have direct conclusion that $X$ and $Y$ are independent if and only if 
\begin{equation}\label{exy}
E[XY]=E[X]\cdot E[Y].
\end{equation}
In particular, if  $X\in\{0,1\}$ and $Y\in\{0,1\}$, it is easy to see that condition $E[XY]=E[X]\cdot E[Y]$  transforms directly into $\PP[X=1,Y=1]=\PP[X=1]\cdot\PP[Y=1],$ which implies the independence of $X$ and $Y$.

 If  $X$ and $Y$ are categorical variables then one can apply equation \eqref{Aff} to any  transformation of the variables, namely, one can apply equation \eqref{Aff} to $\hat{X}$ and $\hat{Y}$, where $\hat{X}=\varphi_1(X)$ and $\hat{Y}=\varphi_2(Y)$, and  $\varphi_1\colon\{a_1,\dots,a_n\}\to\R$ and  $\varphi_2\colon\{b_1,\dots,b_m\}\to\R$ are injective mappings chosen arbitrarilly. In practice, however, the choice of transformations $\varphi_1$ and $\varphi_2$ may influence the robustness of the corresponding statistical tests, especially when the values of $X$ and $Y$ do not have a natural order. An exception is the case of two-points distributions, in which the Pearson's coefficient will be transformations' invariant - this follows directly from its invariance under the linear transformations.
\end{example}

\begin{example}\label{Mv}[Mixed variables] Assume that  $X\in\{a_1,\dots,a_n\}\subset\R$ is a discrete variable and that $Y\in\R$ has a continuous distribution. By Theorem \ref{T1}, $X$ and $Y$ are independent if and only if for any $i\in\{1,\dots,n\}$ and $t\in\R$ with $\PP[Y\leq t]>0$,
$$E[XY| X\leq a_i, Y\leq t]=E[X| X\leq a_i, Y\leq t]\cdot E[Y| X\leq a_i, Y\leq t].$$

If $X\in\{a_1,a_2\}$, the above independence condition reduces to the following:
$$E[XY|  Y\leq t]=E[X|  Y\leq t]\cdot E[Y|Y\leq t], \mbox{ for any t with }\PP[Y\leq t]\geq0.$$

 For the illustration, Figure \ref{fig_2} shows the sample of numerically generated observations of $X$ and $Y$, where $X\in\{0,1,2\}$ is a discrete variable and $Y\in\R$ is a continuous variable, the sample size is $n=180$. The value of the empirical correlation between $X$ and $Y$  is  close to zero and equals approximately $\hat{\cor}(X,Y)=-0,01$. At the same time, if we consider the conditioning set $U=\{Y\leq 15,02\}$ which corresponds to the quantile $q_Y=\frac13$, the  conditional correlation has value $\hat{\cor}_U(X,Y)=-0,31$ and the subsample size is $n=60$. Figure \ref{fig_2} provides the graphical illustration: the black line represents the   line of the best fit (by the least square regression) between observations $Y$ and $X$,  and the red line is the estimated regression line between  $Y$ and $X$ given the conditioning set $\{(x,y)\colon y\leq 15,02\}$. The global regression line is given by the approximation $y=-0.025\cdot x + 15,86$, while  the conditional regression line is given by the equation $y=-0,38\cdot x + 14,14$. 

\begin{figure}[H]
\centering
\includegraphics[width=3in]{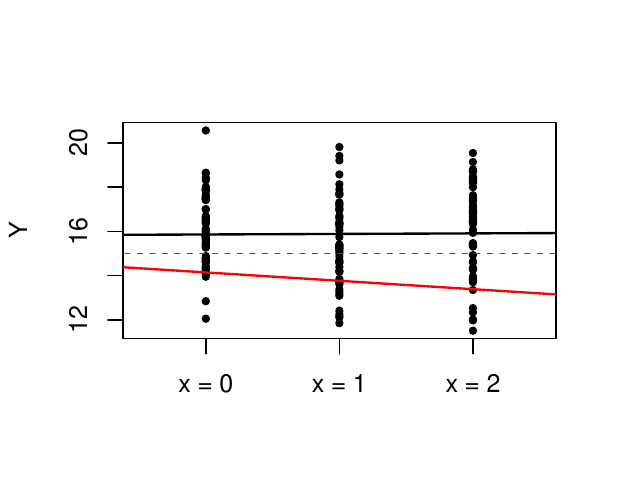}
\caption{   $180$ pairs of observations $X$ and $Y$ are sampled. The black line is the global line of linear regression between $Y$ and $X$, and the red continuous line is the line of linear regression between $Y$ and $X$ conditioning on the set determined by the quantile $q_Y=\frac13$}
\label{fig_2}
\end{figure}
\end{example}

Below, we will examine the residuals against the fitted values in the linear regression model with a continuous independent variable and  discrete dependent variables. 
\begin{example}[Linear model]
Figure \ref{fig_3} presents the residuals vs the outcome of a linear regression model $Y=aX_1+bX_2+cX_3+d+\eps$, based on the dataset Wage from ISLR package in R (the  data have been assembled from:  https://www.re3data.org/repository/r3d100011860). The sample size is $n=3\cdot 10^3$. The independent variable is worker's raw wage (a continuous variable),  and the  dependent variables are: year (in age), worker's age (in years),  and a jobclass (a categorical variable: "Industrial", "Information", coded as a $0-1$ dummy variable ). Naturally, the global empirical correlation between the residuals and the fitted values of the model equals to zero. At the same time, the conditional correlation corresponding to the left tale of the fitted values  given by the quantile $q=0.2$ equals approximately $0.26$  and indicates the  dependence between the residuals and the fitted values. For illustration, see Figure 3 which presents the data and the conditional regression line (a red line).

\begin{figure}[H]
\centering
\includegraphics[width=3in]{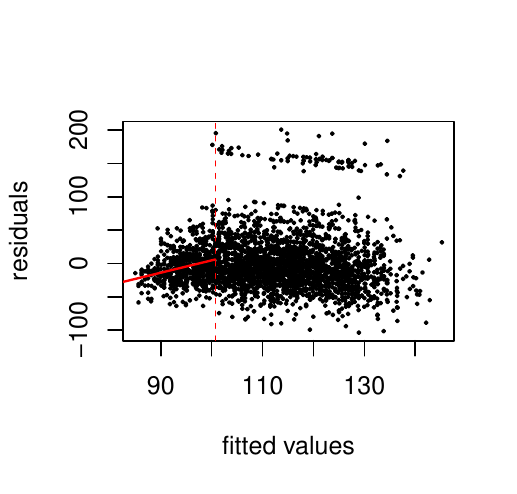}
\caption{While the global empirical correlation between the residua and the fitted values equals zero,  we observe the dependence  on the left tail of the fitted values given by the quantile $q=0.2$ }
\label{fig_3}
\end{figure} 
\end{example}

Now we will discuss the concept of local uncorrelation, \cite{DK}.  The authors of \cite{JJP}  pointed out an ellegant property of local uncorrelation: if the   random variables $X$ and $Y$ are locally linearly independent, then they are stochastically independent. More precisely,  $X$ and $Y$ are independent if and only if they are uncorrelated conditioning on rectangles of arbitrarily small size. While this observation holds true under the assumptions of \cite{JJP}, it cannot be extended to full generality. We will start the discussion with the simple example.

\begin{example}\label{SM} Assume that $X$ is supported on the discrete set $\{a_1,\dots,a_n\}$, and $Y\in\R$ is arbitrary. If $\eps>0$ is small enough then any rectangle $D=[a,b]\times [c,d]$ with $\PP_{(X,Y)}(D)>0$ and $\diam(D)<\eps$ satisfies $\PP_X([a,b])=\PP_X[\{a_j\}]$ for some $j\in\{1,\dots,n\}$,
and thus for $U=\{(X,Y)\in [a,b]\times [c,d]\}$ we have:
$$E[XY|U]=E[XY|X=a_j, Y\in [c,d]]=a_j\cdot E[Y|X=a_j, Y\in [c,d]]=$$
$$=E[X|X=a_j, Y\in [c,d]]\cdot E[Y|X=a_j, Y\in [c,d]]=E[X|U]\cdot E[Y|U],$$
regardless of whether $X$ and $Y$ are dependent or not. 
\end{example}
\ \\
Now, we will address the question of when the local uncorrelation determines independence. Recall that the support of a measure $P$, denoted by $\supp(P)$, is the smallest closed set $D$ with $P(D)=1$. In case of Example \ref{SM}, the support of $\PP_X$ is not an interval, and in such case the local conditioning $E[X|X\in A]$, $A\in K_X^+(\eps)$, does not determine the measure $\PP_X$, see Example \ref{LCC} from Section \ref{S3}. Given an arbitrary $P\in \M$ and $\eps>0$, recall the notation: $K^+_P(\eps)=\{[a,b]\colon P([a,b])>0\mbox{ and }b-a\leq\eps\}.$ We will  prove the following lemma.
\begin{lemma}\label{CCCC}
Assume that $X_1$ and $X_2$ are random variables such that both supports $\supp(\PP_{X_1})$ and $\supp(\PP_{X_2})$ are non-degenerate intervals (bounded or unbounded). Let $P_1:=\PP_{X_1}$ and $P_2:=\PP_{X_2}$. Fix $\eps>0$ and assume that  $\KK_{P_2}^+(\eps)\subset \KK_{P_1}^+(\eps)$. If
\begin{equation}\label{Epsilon}
E[X_1|X_1\in [a,b]]=E[X_2|X_2\in[a,b]] \mbox{\ \  for any \ } [a,b]\in \KK_{P_2}^+(\eps),
\end{equation}
then $P_1=P_2$.
\end{lemma}
\begin{proof}
First we will prove that equation \eqref{Epsilon} forces $\KK_{P_1}^+(\eps)=\KK_{P_2}^+(\eps)$. For a contradiction, assume that for some $a,b\in\R$ with $0<b-a\leq\eps$, we have $P_1([a,b])>0$ and $P_2([a,b])=0$. As the support of $P_2$ is an interval, this implies that $P_2([b,+\infty])=1$ oraz $P_2(-\infty,a])=1$. Without loss of generality assume that $P_2([b,+\infty])=1$.
Let
$$x:=\inf \supp(P_2).$$
As $x\geq b$, and $\supp(P_2)$ is an interval with $\supp(P_2)\subset\supp(P_1)$, we have that $x$ belongs to the interior of  $\supp(P_1)$. Hence, $P_1\left([x-\frac{\eps}{2},x)\right)>0$ and $P_1\left((x,x+\frac{\eps}{2}]\right)>0$, which forces 
\begin{equation}\label{Wik}E[X_1|X_1\in [x-\frac{\eps}{2},x+\frac{\eps}{2}]]<E[X_1|X_1\in [x,x+\frac{\eps}{2}]].
\end{equation}
At the same time, by the definition of $x$ we have  $P_2\left([x-\frac{\eps}{2},x)\right)=0$, and hence
\begin{equation}\label{Wika}E[X_2|X_2\in [x-\frac{\eps}{2},x+\frac{\eps}{2}]]=E[X_2|X_2\in [x,x+\frac{\eps}{2}]].
\end{equation}
By the assumptions: $E[X_1|X_1\in [x,x+\frac{\eps}{2}]]=E[X_2|X_2\in [x,x+\frac{\eps}{2}]]$, and thus \eqref{Wik} and \eqref{Wika} lead to: 
$$E[X_1|X_1\in [x-\frac{\eps}{2},x+\frac{\eps}{2}]]<E[X_2|X_2\in [x-\frac{\eps}{2},x+\frac{\eps}{2}]],$$
a contradiction with \eqref{Epsilon}.\\
From now we assume that $\KK_{P_2}^+(\eps)= \KK_{P_1}^+(\eps)$. Fix $A\in \KK_{P_1}^+(\eps)$ and note that both conditional measures 
$$P_1^A:=P_{1}(\ \cdot\ |A)\mbox{ and }P_2^A=P_{2}(\ \cdot\ | A),$$
 satisfy the assumptions of Lemma \ref{L2}. Namely, by \eqref{Epsilon}, for any interval $[a,b]\subset A$ ,
$$\frac{\int\limits_{[a,b]}xP_1^A(dx)}{P_1^A([a,b])}=\frac{\int\limits_{[a,b]}xP_2^A(dx)}{P_2^A([a,b])},$$
and hence, by Lemma \ref{L2}, $P_1^A=P_2^A$.  Thus, by the definition of $P_i^A$, $i=1,2$,
$$\frac{P_1([c,d])}{P_1(A)}=\frac{P_2([c,d])}{P_2(A)},\mbox{ for any }[c,d]\subset A,$$
and hence
$$\frac{P_2([c,d])}{P_1([c,d])}=\frac{P_2(A)}{P_1(A)},\mbox{ for any }[c,d]\subset A.$$
As the above quotient  $\frac{P_2([c,d])}{P_1([c,d])}$ is constant for any $[c,d]\subset A$, by \eqref{RNCH} we obtain that the Radon-Nikodym derivative $\frac{dP_2}{dP_1}$ is constant $P_1$ - almost surely on the interior of $A$.  It is easy to conclude that the R-N derivative $\frac{dP_2}{dP_1}$ is constant $P_1$-almost surely on the whole interval $A$:  write $A=[s,t]$ and note that if, for instance, $P_1(\{t\})>0$, then  the value of $\frac{dP_2}{dP_1}$ at point $t$ is defined uniquely by
$$\frac{dP_2}{dP_1}(t)=\frac{P_2(\{t\})}{P_1(\{t\})}=\lim\limits_{\eps\to0^+}\frac{P_2([t-\eps,t])}{P_1([t-\eps,t])}=\frac{P_2(A)}{P_1(A)}.$$

Now, let $x_0=0$ and $x_k:=k\cdot\frac{\eps}{2}$, $k\in\mathbb{Z}$, and let 
$$A_k:=[x_k-\frac{\eps}{2},x_k+\frac{\eps}{2}],\ k\in\mathbb{Z},$$
so the family $\{A_k\}_{k\in\mathbb{Z}}$ is the family of overlapping intervals which cover the whole real line. Let
$$\mathbb{I}=\{k\in \mathbb{Z}\colon P_1(A_k)>0 \}.$$
Naturally:  
$$ \operatorname{supp}(P_1) \subset \bigcup\limits_{i\in\mathbb{I}} A_i.$$
 As $\operatorname{supp}(P_1)$ is an interval, for any $k\in \mathbb{I}$ such that $k+1\in\mathbb{I}$ we have 
\begin{equation}\label{eheh}
P_1(A_k\cap A_{k+1})>0.
\end{equation}
 Additionally, as $A_k$ and  $A_{k+1}$ belong to $\KK_{P_1}(\eps)$,  the derivative $\frac{dP_2}{dP_1}$ is constant $P_1$- almost surely on $A_k$ and is constant $P_1$-almost surely on $A_{k+1}$. As we have $\eqref{eheh}$, by simple induction we obtain that $\frac{dP_2}{dP_1}$ is constant $P_1$-almost surely on the whole support  of $P_1$. This implies that for some $c\in\R$ we have $P_2=c\cdot P_1$. As both $P_1$ and $P_2$ are probability measures, we have $P_2=P_1$.  

\end{proof}

\begin{remark}
 A probability measure with jumps (i.e. a measure with nontrivial discrete part) still may satisfy the assumptions of Lemma \ref{CCCC}  as far as the jump points are not isolated points of the support. 
\end{remark}
\ \\
Now, we are back to the question of when the local conditioning determines the independence. We will say that $X$ and $Y$ are locally uncorrelated if and only if there is $\eps>0$ such that $\cov_U(X,Y)=0$ for any $U=\{X\in[a,b],\ Y\in[c,d]\}$ with $\PP[U]>0$ and $\diam([a,b]\times[c,d])=\max(b-a,d-c)\leq\eps$.\\

At first, note that if $P\in\mcl{M}^1(\R^2)$ has marginals $P_1$ and $P_2$, i.e.:
$$P_1(A)=P(A\times\R)\mbox{ and }P_2(A)=P(\R\times A), \ A\in\B,$$
then
$$\supp(P)\subset \supp(P_1)\times \supp(P_2).$$
Additionally, we always have:
$$\supp(P_1\otimes P_2) = \supp(P_1)\times \supp(P_2).$$
 We are ready to state the following.


\begin{theorem}\label{LLL}
Assume that $X$ and $Y$ are such that both supports $\supp(\PP_{X})$ and $\supp(\PP_{Y})$ are arbitrary intervals (bounded or unbounded), and that
\begin{equation}\label{ZAL}\supp(\PP_{(X,Y)}) = \supp(\PP_{X})\times \supp(\PP_{Y}).
\end{equation}

Under these assumptions, $X$ and $Y$ are independent if and only if they are locally uncorrelated.
\end{theorem}

To prove the above it is enough to repeat the steps of  the proof of Theorem \ref{T1} with the exception that instead of Lemma \ref{L2} and Observation \ref{OL2} from Section \ref{S3} we  use Lemma \ref{CCCC}. The assumptions of Theorem \ref{LLL} guarantee that all the conditional measures from the proof of Theorem \ref{T1} satisfy the assumptions of Lemma \ref{CCCC}. 
\begin{remark}
Now we comment on the assumptions of \cite{JJP}: the assumption that the joint density of $(X,Y)$ is positive on the cube guarantees that assumption \eqref{ZAL} is satisfied, and, in particular,  the assumption that $F_X$ and $F_Y$ are invertible guarantee that the supports of $X$ and $Y$ are intervals.
\end{remark}
At the end we  present a numerical example in which $X$ and $Y$ are locally uncorrelated but not independent. In the example  the assumption \eqref{ZAL} is satisfied but  the supports of $X$ and $Y$ are not intervals. 
\begin{example}[Unconnected support]
Let us consider the sample of observation pairs $(X,Y)$, for which: $X=X_1+ 2\cdot Z_1$, and $Y=X_2+2\cdot Z_2$, where $(X_1,X_2))$ is uniformly distributed on the cube $[0,1]^2$, and $Z=(Z_1,Z_2)\in \{(0,0),(1,0),(0,1),(1,1) \}$ is not distributed uniformly: $\PP_Z(\{(0,1)\})=\PP_Z(\{(1,0)\})=\frac16$,   $\PP_Z(\{(0,0)\})=\PP_Z(\{(1,1)\})=\frac13$. Additionally, the variables $(X_1,X_2,Z_1,Z_2)$ are assumed to be mutually independent. In this toy example we treat $Z$ as a hidden variable 
that controls the location of $X$ and $Y$. Here, the local uncorrelation determines the  independence with respect to the conditional measure $\PP_{(X,Y)}(\ \cdot\ | Z=(a,b))$ for  any $a,b\in\{0,1\}$, whereas full independence $\PP_{(X,Y)}=\PP_X\otimes \PP_Y$ is not satisfied.

\begin{figure}[H]
\centering
\includegraphics[width=3in]{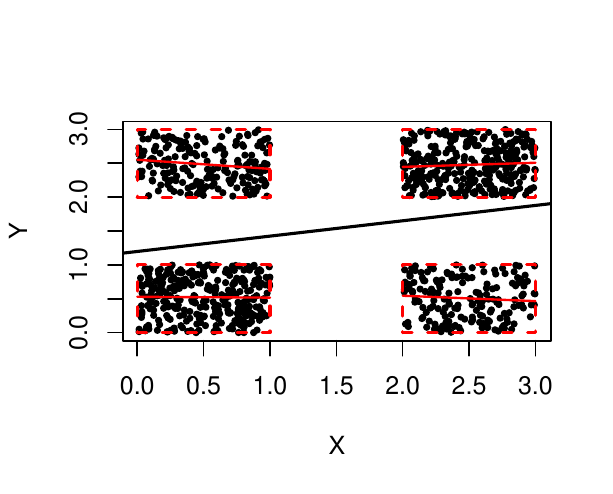}
\caption{  While $X$ and $Y$ are visibly dependent, they are independent locally. The red lines are conditional regression lines and the black line is the global regression line, the sample size $n=10^3$.}
\label{fig_5}
\end{figure} 
\end{example}

\textbf{Declaration of interests}. I have nothing to declare.\\
\textbf{Funding}. This research did not receive any specific grant from funding agencies in the public, commercial, or not-for-profit sectors.


\end{document}